\documentclass[11pt, preprint, reqno]{amsart}
\usepackage{graphicx} 
\usepackage{tikz}
\usepackage{pgfplots}
\pgfplotsset{compat=1.18}
\usepackage{mathabx}

\usepackage{enumitem}
\usepackage{amsfonts}
\usepackage{amsmath}
\usepackage{amsthm}
\usepackage{amssymb}
\usepackage{mathrsfs}
 \usepackage{hyperref}
\hypersetup{colorlinks={true},linkcolor={black},citecolor=black}
\usepackage[capitalize]{cleveref}
\usepackage{pst-node}
\usepackage{auto-pst-pdf}
\usepackage{tikz-cd} 

\usepackage{geometry}

\renewcommand{\epsilon}{\varepsilon}

\newcommand{\domain}{\Omega}

\newtheorem{theorem}{Theorem}[section]
\newtheorem{lemma}[theorem]{Lemma}
\newtheorem{corollary}[theorem]{Corollary}
\newtheorem{proposition}[theorem]{Proposition}

\theoremstyle{definition}
\newtheorem{remark}[theorem]{Remark}

\title{Monopolist nonlinear pricing in a seller's market}
\author{Lucas D. O'Brien}
\address{Department of Mathematics, Massachusetts Institute of Technology}
\email{obrie720@mit.edu}
\date{}

\thanks{
MSC 2020: 
49N10 
Secondary:
35Q91 
91B43 
\\ 
Lucas O'Brien's work was supported by an internal fellowship from the Massachusetts Institute of Technology School of Science. The author is grateful to Robert McCann, Cale Rankin, and Kelvin Shuangjian Zhang for many fruitful exchanges.  \\
    \copyright \today}

\begin{document}

\begin{abstract} 
    In this paper, we study the Rochet-Chon\'{e} model of monopolist pricing in the seller's market limit. We introduce a monotonicity formula which yields an asymptotic description of solutions, and prove that the proportion of consumers priced out of the market vanishes as overall consumer demand increases, providing a counterpoint to Armstrong's desirability of exclusion. 
\end{abstract}

\maketitle

\section{Introduction}

Given a distribution of consumers, the monopolist's problem is to find a profit-maximizing price menu for a set of available products. This problem serves as a paradigm for multidimensional screening in economics, see \cite[\S 7]{Basov05}. A model case of the monopolist's problem is given by the following variational problem, derived in the seminal work of Rochet and Chon\'{e} \cite{RochetChone98}. Given $n \geq 2$, a bounded, open, and convex domain $\domain \subseteq \mathbf{R}^n$ and a Lipschitz $f \in C^{0,1}(\overline{\domain})$ with $f \geq \lambda > 0$, minimize the functional 
\begin{equation}\label{eq:profitfunctiondefinition}
    \Phi[u] := \int_{\domain}[\frac{1}{2}|D u - x|^2 + u]\, f(x)dx
\end{equation}
over the set
\begin{equation}\label{eq:admissiblefunctions}
    \mathcal{U} := \{u : \overline{\domain} \to \mathbf{R} \ |\ u \geq 0 \text{ is convex}\}. 
\end{equation}
Here, $f\,d\mathcal{H}^n|_{\domain}$ represents the distribution of consumers, 
\[
\Pi[u] := \frac{1}{2}\int_{\domain}|x|^2 f(x)dx - \Phi[u]
\]
the monopolist's profits, and $u(x)$ the \textit{indirect utility} of a consumer $x \in \domain$; mathematically, $u$ is the Legendre transform of the price menu. Its gradient $Du$ maps a consumer $x \in \domain$ to the product they purchase under the monopolist's optimal price menu. By work of Caffarelli and Lions \cite{CaffarelliLions06+} and its extension by McCann, Rankin, and Zhang \cite{McCannRankinZhang25}, the minimizer $u$ of \eqref{eq:profitfunctiondefinition} over \eqref{eq:admissiblefunctions} has the optimal interior regularity 
\[u \in C^{1,1}_{\mathrm{loc}}(\domain).\] 
See the recent paper by Chen, Figalli, and Zhang \cite{ChenFigalliZhang26+} for results on the global regularity of $u$, including a construction showing that $C^1(\overline{\domain})$ regularity is sharp on convex but not strictly convex planar domains when $f \equiv 1$. 

The above variational problem has proven to be incredibly mathematically rich and subtle, with deep connections to the obstacle problem, Monge-Amp\'{e}re equation, and the theory of optimal transport. By and large, this subtlety is explained by the presence of \textit{bunching} and \textit{exclusion} regions in the monopolist's solution. Armstrong's \textit{desirability of exclusion} is the assertion that a positive proportion of consumers should be priced out of the market under the monopolist's optimal strategy; mathematically, this means that the \textit{exclusion region} $Du^{-1}\{0\}$ should have positive volume. Armstrong proved this result when $\domain$ is strictly convex \cite[Proposition 1]{Armstrong96}, while Figalli, Kim, and McCann \cite[Theorem 4.8]{FigalliKimMcCann11} verified the desirability of exclusion when $\domain$ has no $(n-1)$-dimensional facets in its boundary; see also \cite[Theorem 1.1]{McCannRankinZhang24+} for a refinement of the desirability of exclusion when boundary facets are present. The exclusion region is a distinguished example of the phenomenon of \textit{bunching}, in which all consumers in a set $\ell \subseteq \domain$ of positive dimension purchase the same product under the monopolist's optimal price menu. These nontrivial level sets of $Du$ are referred to as \textit{leaves}, or \textit{rays} if they are one-dimensional, see \cite{McCannRankinZhang24+}.

Recently, the groundbreaking work of McCann and Zhang \cite{McCannZhang23+}, \cite{McCannZhang24+} and McCann, Rankin, and Zhang \cite{McCannRankinZhang24+} provided a complete qualitative description of the bunching regions of the family of solutions $u$ to the monopolist's problem on the square 
\[
\domain = [a, a+1]^2 \subseteq \mathbf{R}^2, \quad a \geq 0,
\]
with constant density $f \equiv 1$. This work uncovers a sharp distinction between bunching behaviours in a buyer's market $a\ll 1$ versus in a seller's market $a \gg 1$ \cite[Theorem 1.5]{McCannRankinZhang24+}, and gives an explicit description the exclusion region $Du^{-1}\{0\}$ in a seller's market \cite[Lemma 4.7]{McCannZhang23+}, \cite[Theorems 1.1, 1.5]{McCannRankinZhang24+}. Additionally, this work included the discovery of new bunching regions foliated by rays whose free boundaries are modelled locally by low-regularity obstacle problems; their regularity was studied in subsequent works by the author, McCann, and Rankin \cite{McCannOBrienRankin26} and Chen, Figalli, and Zhang \cite{ChenFigalliZhang26+}. 

Motivated by this work on the monopolist's problem on the square $[a, a+1]^2 \subseteq \mathbf{R}^2$, this paper will study the  asymptotic behaviour of a one-parameter family of solutions to the monopolist's problem. Fix a bounded, open, convex domain $\domain \subseteq \mathbf{R}^n$, $f \in C^{0,1}(\overline{\domain})$ with $f \geq \lambda > 0$, and $\tau \in \mathbf{S}^{n-1}$. Then, for each $a > 0$, let us denote by $u_a$ the minimizer of 
\begin{equation}\label{eq:shiftedfunctional}
    \Phi_a[u] := \int_{\Omega + a \tau}[\frac{1}{2}|Du - x|^2 + u]f(x - a\tau)dx
\end{equation}
over the set
\begin{equation}\label{eq:shiftedadmissibleset}
    \mathcal{U}_a := \{u : \overline{\domain} + a \tau \to \mathbf{R} \ | \ u \geq 0 \text{ is convex}\}. 
\end{equation}
Likewise, let 
\begin{equation}\label{eq:profitofua}
    \Pi_a[u_a] := \int_{\domain + a \tau}|x|^2 f(x - a \tau) dx - \Phi_a[u_a]
\end{equation}
denote the monopolist's profits. Then, $u_a$ is the solution to the monopolist's problem when the distribution of customers $f\, d\mathcal{H}^n|_{\domain}$ is translated by $a \tau$. When $\tau$ lies in the positive orthant $[0, \infty)^n$, shifting by $a\tau$ corresponds to a uniform increase in demand for the monopolist's product; therefore, the limit as $a \to + \infty$ will be referred to as \textit{the seller's market limit}.

We may now state our main results:
\begin{theorem}[Main results]\label{theorem:intromainresults}
    Let $\domain \subseteq \mathbf{R}^n$ be open, bounded, and convex, let $f \in C^{0,1}(\overline{\domain})$ with $f \geq \lambda > 0$, and take $\tau \in \mathbf{S}^{n-1}$. Let $u_a$ minimize \eqref{eq:shiftedfunctional} over \eqref{eq:shiftedadmissibleset}. Denote 
    \[
    d\mu_a := f(x - a\tau)d\mathcal{H}^n|_{\domain + a \tau},
    \]
    where $\mathcal{H}^n$ is the $n$-dimensional volume. Then, there exists a monotonically decreasing function $\Xi(a)$ with $\Xi(a) \to 0$ as $a \to + \infty$ such that the following hold:
    \begin{enumerate}
        \item (Profit formula) \[\Pi_a[u_a] = \frac{a^2}{2}\int_{\domain}f(x)dx + a \Xi(a),\]
        \item (Bound on the exclusion region)
        \[
    \mathcal{H}^n(Du_{a}^{-1}\{0\}) \leq \frac{2}{\lambda - o(a^{-1})}\frac{\Xi(a)}{a}
    \]
    \item ($L^2$ estimate for $Du_a$)
    \[
    ||Du_a - x||_{L^2(\mu_a)} \leq \sqrt{2a \Xi(a)}, \text{ and }
    \]
    \item ($L^2$ estimate for $u_a$) if we assume in addition that $f$ is log-concave, then 
    \[
    ||u_a - \frac{1}{2}(|x|^2 - a^2) + c_a||_{L^2(\mu_a)} \leq \sqrt{2C a \Xi(a)},
    \]
    where $C$ is a Poincar\'{e} constant for $\frac{1}{\int_{\domain}f(x)dx}d\mathcal{H}^n|_{\domain}$, and 
    \[
    c_a := \frac{1}{\int_{\domain}f(x)dx}\left(a\Xi(a) + \frac{1}{2}||Du_a - x||_{L^2(\mu_a)}^2\right).
    \]
    \end{enumerate}
\end{theorem}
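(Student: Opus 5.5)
The plan is to read $\Xi$ off the profit formula, prove monotonicity by a scaling competitor, obtain the limit by a blow-down, and then derive (2)--(4) from an $L^2$ bound plus optimality identities. Write $M:=\int_\domain f\,dx$ and substitute $x=y+a\tau$, $y\in\domain$. Expanding the square, $\Pi_a[u]=\int(x\cdot Du-u-\tfrac12|Du|^2)\,d\mu_a$ (I use the normalization $\Pi=\tfrac12\int|x|^2f-\Phi$ from the introduction). Since $|Du|^2-2a\tau\cdot Du=|Du-a\tau|^2-a^2$, this gives
\[
\Pi_a[u]-\tfrac{a^2}{2}M=\int_\domain\Big[y\cdot Du-u-\tfrac12|Du-a\tau|^2\Big]f(y)\,dy=:\Psi_a[u].
\]
I then define $\Xi(a):=\Psi(a)/a$ with $\Psi(a):=\sup_{\mathcal U}\Psi_a$, so that item (1) holds by definition.

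\emph{Monotonicity.} Let $u$ be optimal for $b>0$ and $t\in(0,1]$. The function $tu$ is still convex and nonnegative, and $|D(tu)-tb\tau|^2=t^2|Du-b\tau|^2\le t|Du-b\tau|^2$. Hence $\Psi(tb)\ge\Psi_{tb}[tu]\ge t\Psi(b)$, so $\Psi(a)/a$ is nonincreasing in $a$. This uses only that $t^2\le t$.

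\emph{Limit.} Set $u=av$, so that
\[
\Xi(a)=\sup_v\int_\domain\Big[y\cdot Dv-v-\tfrac a2|Dv-\tau|^2\Big]f\,dy .
\]
As $a\to\infty$ the penalty forces $Dv\to\tau$ in $L^2$. A $\Gamma$-convergence/compactness argument for convex $v\ge0$ then gives $\lim\Xi=\sup\int(y\cdot\tau-v)f$ over $v=\tau\cdot y+c\ge0$, which equals $M\min_{\overline\domain}\tau\cdot y$.

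Translating $\domain$ along $\tau$ only reparametrizes $a$. I therefore normalize $\min_{\overline\domain}\tau\cdot y=0$, and I expect the paper does so implicitly. I expect this identification of the limit to be the main obstacle. The lower bound comes from the linear competitor $u=a\tau\cdot y$. The upper bound needs care: $\int y\cdot Dv$ must be controlled by integrating by parts, using convexity, $v\ge0$, and boundary terms on $\partial\domain$.

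\emph{Estimates (2)--(4).} I would use the first variation $u\mapsto(1+\epsilon)u$, which is admissible for small $|\epsilon|$. It gives the identity $\int[Du\cdot(Du-x)+u]\,d\mu_a=0$. Combining this with $\Pi_a=\int(\tfrac12|x|^2-\tfrac12|Du-x|^2-u)\,d\mu_a$ and item (1), I aim to isolate $\tfrac12\|Du_a-x\|^2_{L^2(\mu_a)}$. I expect a possible further comparison with the linear competitor to cancel the $a^2$ and $a\int\tau\cdot y\,f$ terms, leaving $\le a\Xi(a)$; this would be item (3).

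For item (2): on $Du_a^{-1}\{0\}$ we have $|Du_a-x|=|x|\ge a-O(1)$, and $d\mu_a\ge\lambda\,dx$. Hence $\lambda(a-O(1))^2\,\mathcal H^n(Du_a^{-1}\{0\})\le2a\Xi(a)$. Dividing by $a^2$ yields the stated $\frac{2}{\lambda-o(a^{-1})}\frac{\Xi(a)}{a}$, after absorbing the $O(1)$ shift of $|x|$ into the $o(a^{-1})$ correction.

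For item (4): apply the Poincar\'e inequality for the probability measure $M^{-1}f\,dx$ to $w:=u_a-\tfrac12(|x|^2-a^2)$, whose gradient is $Du_a-x$. Log-concavity of $f$ on convex $\domain$ supplies the Poincar\'e constant $C$ via Payne--Weinberger/Bakry--\'Emery. It remains to compute the mean of $w$, which should be $-c_a$. I plan to get this by integrating the Euler--Lagrange identity above against $\mu_a$ and using $\int u_a\,d\mu_a$ from the profit formula. This should produce exactly $c_a=M^{-1}\bigl(a\Xi(a)+\tfrac12\|Du_a-x\|^2\bigr)$, and then (3) gives the stated $\sqrt{2Ca\Xi(a)}$ bound.
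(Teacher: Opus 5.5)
Your treatment of (1), of the monotonicity, and of the mean computation in (4) is sound. However, item (3) is not proved, and the route you sketch does not lead to it; since your (2) and (4) both rest on (3), they are affected too. The scaling variation $u\mapsto(1+\epsilon)u$, combined with the definition of $\Pi_a$ and with (1), yields only identities. These are $\Pi_a[u_a]=\tfrac12\|Du_a\|^2_{L^2(\mu_a)}$ and, with $\bar u(x):=\tfrac12(|x|^2-a^2)$,
\[
\tfrac12\|Du_a-x\|^2_{L^2(\mu_a)}=\int(\bar u-u_a)\,d\mu_a-a\Xi(a),
\]
so a genuine one-sided inequality is still needed. The linear competitor $u_L=a\tau\cdot y$ does not supply the right one. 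It satisfies $\Phi_a[u_L]=\Phi_a[\bar u]$, so expanding $\Phi_a$ along the segment from $u_a$ to $u_L$ gives $\tfrac12\|Du_a-a\tau\|^2_{L^2(\mu_a)}\le a\Xi(a)$. That controls $Du_a-a\tau$ rather than $Du_a-x$, and the two differ by the order-one field $y$. (That bound would in fact give (2), since $|Du_a-a\tau|=a$ on $Du_a^{-1}\{0\}$, but not (3) or (4).)

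The missing idea is to test against $\bar u$ itself. It is convex and satisfies $D\bar u=x$. It is nonnegative on $\overline\Omega+a\tau$ thanks to your normalization, since $\bar u(y+a\tau)=\tfrac12|y|^2+a\tau\cdot y$. It also has $\Pi_a[\bar u]=\tfrac{a^2}{2}M$, so $\Phi_a[\bar u]-\Phi_a[u_a]=a\Xi(a)$ by (1). Since $\Phi_a$ is quadratic,
\[
\Phi_a[\bar u]-\Phi_a[u_a]=\lim_{t\to0^+}\frac{\Phi_a[(1-t)u_a+t\bar u]-\Phi_a[u_a]}{t}+\tfrac12\|x-Du_a\|^2_{L^2(\mu_a)},
\]
and the limit is nonnegative because the segment stays in $\mathcal U_a$. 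This gives (3), and it is exactly the paper's argument (Lemma~\ref{lemma:profitformula} and Proposition~\ref{prop:boundonl2convergence}, written in the rescaled variable $v_a$). A smaller point in (2): the estimate $|x|\ge a-O(1)$ only gives the factor $\lambda-O(a^{-1})$, not $\lambda-o(a^{-1})$. Under your normalization $|x|^2=a^2+2a\,\tau\cdot y+|y|^2\ge a^2$, so there is no loss at all.

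The limit $\Xi(a)\to0$ is also only half done. The linear competitor gives $\Xi\ge0$, but the upper bound is left open, and it needs no integration by parts or boundary terms. In your notation $\Xi(a)\le\int_\Omega(y\cdot Dv_a-v_a)f\,dy$, so it suffices that $v_a\to\tau\cdot y$ and $Dv_a\to\tau$ a.e.\ with a uniform bound. The paper gets this from four ingredients:
\begin{enumerate}
\item Chon\'e's Lipschitz bound $|Du|\le2\sup|x|$, which makes $Dv_a$ bounded uniformly in $a\ge1$;
\item the existence of a zero of $v_a$;
\item Arzel\`a--Ascoli;
\item the observation that any limit minimizes $\int|Dv-\tau|^2f$ (Proposition~\ref{prop:identificationofthesellersmarketlimit}).
\end{enumerate}
Alternatively, $\Xi\ge0$, $v_a\ge0$ and Cauchy--Schwarz give $\tfrac a2\|Dv_a-\tau\|^2_{L^2(f)}\le C+C\|Dv_a-\tau\|_{L^2(f)}$, hence $Dv_a\to\tau$ in $L^2$. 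Then Poincar\'e's inequality, together with $v_a\ge0$ on the positive-measure sets $\{y\in\Omega:\tau\cdot y<\delta\}$, gives $\liminf_a\int v_af\ge\int(\tau\cdot y)f$, which closes the sandwich.

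Your monotonicity proof, on the other hand, is correct and genuinely different from the paper's. The paper derives $\Xi'(a)=-\tfrac12\int|Dv_a-\tau|^2f$ from two-sided comparisons of $\Psi_a$ and $\Psi_{a+h}$, which presupposes that the individual terms are differentiable in $a$. Your $tu$ competitor gives monotonicity with no regularity in $a$, though without the derivative formula. Equivalently, in $v$-variables $\Xi$ is a supremum over an $a$-independent class of functions nonincreasing in $a$. Your remark that in general $\Xi\to M\min_{\overline\Omega}\tau\cdot y$, so the normalization is needed, is also correct. The paper imposes it by taking $0\in\partial\Omega$ with $-\tau$ in the outward normal cone (Theorem~\ref{theorem:monotonicityformula}).
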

These results are proved in Lemma \ref{lemma:profitformula}, Corollary \ref{cor:boundontheexclusionregion}, and Proposition \ref{prop:boundonl2convergence}, respectively. Theorem \ref{theorem:intromainresults} (2) shows that the proportion of consumers that will be priced out of the market under the monopolist's optimal price menu vanishes in the seller's market limit; this provides a counterpoint to Armstrong's desirability of exclusion. Theorem \ref{theorem:intromainresults} (3) and (4) show that $u_a$ can be approximated by $\frac{1}{2}(|x|^2 - a^2) - c_a$ as $a \to + \infty$, providing a first-order asymptotic description of the solution.

Unlike McCann, Rankin, and Zhang's description of the solution in the square example \cite[\S 4]{McCannZhang23+}, \cite[Theorem 1.5]{McCannRankinZhang24+}, which relied on studying Euler-Lagrange equations, the proof of Theorem \ref{theorem:intromainresults} will rely on more indirect methods, including a novel monotonicity formula in the seller's market limit, Theorem \ref{theorem:monotonicityformula}.

\section{The seller's market limit in the monopolist's problem}

In order to study the seller's market limit, it is useful to rescale $u_a$ in order to extract a convergent subsequence as $a \to + \infty$. To this end, we define $v_a : \domain \to \mathbf{R}$ by
\begin{equation}\label{eq:vadefinition}
    v_{a}(x) := \frac{1}{a}u(x  + a \tau),
\end{equation}
then $v_{a}$ will be the minimizer of the functional
\begin{equation}\label{eq:rescaledfunctional}
    \Psi_{a}[v] := \int_{\domain}[\frac{1}{2}|Dv - \frac{x}{a} - \tau|^2 + \frac{1}{a}v]f(x)dx
\end{equation}
over the set
\[
\mathcal{V} := \{v : \overline{\domain} \to \mathbf{R} \ |\ v \geq 0 \text{ is convex}\}. 
\]
We will now study the limits of $v_a$ as $a \to + \infty$. First, we will recall a Lipschitz bound for the solution to the monopolist's problem. Lemma \ref{lemma:lipschitzbound} is attributed to Chon\'{e} in \cite{CarlierLachandRobert01}; we will follow the presentation in \cite[Lemma A.1]{McCannRankinZhang24+}.

\begin{lemma}[Chon\'{e}'s Lipschitz bound]\label{lemma:lipschitzbound}
    Suppose $\domain \subseteq \mathbf{R}^n$ is open, bounded, and convex. Suppose $f \in C^{0,1}(\overline{\domain})$ satisfies $f \geq \lambda > 0$ on $\overline{\domain}$. Let $u: \domain \to \mathbf{R}$ be a nonnegative convex function minimizing \eqref{eq:profitfunctiondefinition} over \eqref{eq:admissiblefunctions}. Then, $u$ is Lipschitz on $\overline{\domain}$ with constant
        \[
        2\sup_{x \in \domain}|x|.
        \]
\end{lemma}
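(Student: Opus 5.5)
The bound on $|Du|$ comes from the fact that every product $p$ in the image of $Du$ is actually bought by some consumer. If the monopolist offered a product with $|p|$ much larger than every consumer's type $|x|$, he could shrink or remove it and gain. To make this precise I would run a variational argument in the direction of a truncated competitor. Since $u$ is convex on a bounded convex domain, it suffices to show that every subgradient $p\in\partial u(x)$, $x\in\domain$, has $|p|\le 2R$, where $R:=\sup_{x\in\domain}|x|$. The bound on $\overline{\domain}$ then follows by taking limits, after extending $u$ to the closure by lower semicontinuity.

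\textbf{Step 1 (competitor).} Define
\[
\tilde u(x):=\sup\{\,u(y)+p\cdot(x-y)\ :\ y\in\domain,\ p\in\partial u(y),\ |p|\le 2R\,\}.
\]
This is the convex envelope built only from supporting planes with slope at most $2R$. Equivalently, in the Legendre picture, we remove from the menu all products with $|p|>2R$. Then $\tilde u$ is convex, $0\le\tilde u\le u$ (nonnegativity holds since $0\in\partial u$ at a minimum point), and $\tilde u$ is $2R$-Lipschitz. Also, $\tilde u=u$ and $D\tilde u=Du$ on the set $A:=\{|Du|\le 2R\}$. Admissibility is therefore fine. We must show $\Phi[\tilde u]\le\Phi[u]$, with strict inequality unless $|\{|Du|>2R\}|=0$.

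\textbf{Step 2 (pointwise comparison).} Compare the integrands on $B:=\{|Du|>2R\}$. The zeroth-order term satisfies $\tilde u\le u$, which has the right sign. The gradient term is the delicate part: we need
\[
\tfrac12|D\tilde u(x)-x|^2\le \tfrac12|Du(x)-x|^2 \quad\text{on } B.
\]
On $B$ we have $|Du-x|\ge |Du|-|x|>2R-R=R$. For the competitor, $|D\tilde u-x|\le |D\tilde u|+|x|\le 2R+R=3R$, which is not good enough. I expect this to be the main obstacle: the naive triangle inequality loses a factor.

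To fix it I would use a better competitor: the radial truncation of slopes $\tilde u:=\bigl(u^*+\chi_{\{|p|\le 2R\}}\bigr)^*$ restricted to products lying in a suitable set. Alternatively, I would use the standard trick of comparing with $u_t:=\max(u,\ell)$-type modifications. The cleanest route, as in \cite[Lemma A.1]{McCannRankinZhang24+}, is a one-dimensional argument along rays. Take $x_0\in\domain$ with $p=Du(x_0)$ and $|p|>2R$, and consider the segment of $\domain$ in direction $e=p/|p|$. Convexity gives $Du\cdot e\ge |p|$ on the part of the segment beyond $x_0$, i.e.\ on $\{x: (x-x_0)\cdot e\ge0\}$, which is a region of positive measure near the boundary. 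Now perturb by $u_\epsilon:=u-\epsilon\,((x-x_0)\cdot e)_+$. This is not convex in general, so instead I would use $u_\epsilon:=\max\bigl(u-\epsilon\varphi,\ \text{supporting plane at }x_0\bigr)$, where $\varphi$ is chosen so that the first variation equals
\[
-\epsilon\int_{\{(x-x_0)\cdot e>0\}}\bigl[(Du-x)\cdot e-\ldots\bigr]f.
\]
In this region $(Du-x)\cdot e\ge |p|-R>R>0$, so the derivative of the quadratic term is strictly negative. The linear term $u$ also decreases, so $\Phi[u_\epsilon]<\Phi[u]$, contradicting minimality.

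\textbf{Step 3 (conclusion).} Choosing the competitor as the convex hull of $u$ restricted to the half-space $\{(x-x_0)\cdot e\le 0\}$, extended by its tangent planes, gives an admissible $\tilde u\le u$. On the cut-off region, $D\tilde u$ is a convex combination of slopes that are strictly closer to $x$ in the $e$-direction. The strict decrease of the energy then yields the contradiction, so $|Du|\le 2R$ almost everywhere, and hence everywhere by convexity. The constant $2$ comes out naturally, since it is exactly what is needed to guarantee $(Du-x)\cdot e>0$ with margin.
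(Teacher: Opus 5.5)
\textbf{The proposal has a genuine gap: none of your three competitors is shown to lower $\Phi$.}

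\emph{Step 1.} You abandon it yourself, and rightly. Write $v=u^*$. For $p\in\partial u(x)$, Fenchel's equality $u(x)+v(p)=\langle x,p\rangle$ turns the integrand into $\tfrac12|p-x|^2+u(x)=\tfrac12|x|^2-\bigl(v(p)-\tfrac12|p|^2\bigr)$. This is $\tfrac12|x|^2$ minus the profit made on consumer $x$. Deleting products pushes consumers onto other products that may be less profitable, so no pointwise comparison is available.

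\emph{Step 2.} It rests on the claim that convexity gives $Du\cdot e\ge|p|$ on the half-space $\{(x-x_0)\cdot e\ge 0\}$. Monotonicity of $Du$ gives this only on the ray $x_0+\mathbf{R}_+e$, which is a null set. For example, take $u=\max(x_1,x_2)$, $x_0=(1,0)$ and $e=e_1$; near $x=(1.1,2)$ one has $Du\cdot e=0<1$. In addition, $\varphi$ is never specified (your first variation contains a literal ``$\ldots$''), and $\max(u-\epsilon\varphi,\ell)$ need not be convex or nonnegative.

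\emph{Step 3.} It asserts a strict decrease without proof, and nothing in the construction forces one. Take $u=\max(0,\langle p,x\rangle-c)$ with $|p|>2R$, the hyperplane $\{\langle p,x\rangle=c\}$ meeting $\domain$, and $x_0\in\domain$ with $\langle p,x_0\rangle>c$. Then both affine pieces are already tangent planes at points of $\{(x-x_0)\cdot e\le 0\}$, so your $\tilde u$ equals $u$. Even where $\tilde u\neq u$, control of the $e$-component of $D\tilde u-x$ says nothing about the orthogonal components of $|D\tilde u-x|^2$.

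\textbf{The missing idea is to raise prices to cost instead of removing products.} This Legendre-transform argument, not a ray argument, is the one the paper takes from McCann--Rankin--Zhang.
\begin{itemize}
\item The paper shows $v(y)\ge\tfrac12|y|^2$ for every $y\in Du(\domain)$. If not, $w:=\max\{v,\tfrac12|y|^2\}$ has a conjugate $w^*\ge 0$, which is admissible.
\item By the identity above, the integrand does not increase anywhere. A consumer whose product was priced at or above cost keeps it at the same price. Every other consumer now yields profit $\ge 0$ instead of a negative one.
\item The integrand strictly decreases on a set of positive measure, contradicting minimality.
\end{itemize}
The bound then follows from Fenchel's equality and $u\ge 0$ alone:
$\tfrac12|Du(x)|^2\le v(Du(x))=\langle x,Du(x)\rangle-u(x)\le |x|\,|Du(x)|$.
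This is exactly where the constant $2$ comes from. Your sketch only ever uses $(Du-x)\cdot e>0$, so its account of the factor $2$ corresponds to no step that actually occurs.
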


\begin{proof}
    Since $u$ is convex, it is locally Lipschitz in $\domain$, and differentiable almost everywhere. Therefore, it suffices to establish $|Du(x)| \leq 2|x|$ at any point of differentiability $x \in \domain$. Let
    \[
    v(y) := \sup_{x \in \domain}\left(\langle y, x \rangle - u(x)\right)
    \]
    denote the Legendre transform of $u$; see \cite[\S 12]{Rockafellar70}.
    We claim that $v(y) \geq \frac{1}{2}|y|^2$ for every $y$ in $D u(\domain)$. 
    
    If this is not the case, set 
    \[
    w(y) := \max\{v(y), \frac{1}{2}|y|^2\},
    \]
    and let $w^*(x)$ denote its Legendre transform. Then, $w^*$ is convex. Moreover, if $z \in \domain$, then by \cite[Theorem 12.2]{Rockafellar70}
    \begin{align*}
        w^*(z) &= \sup_{y}\left(\langle z, y \rangle - \max\{v(y), \frac{1}{2}|y|^2\}\right) \\
        &= \min\{\sup_y\left(\langle z, y \rangle - \frac{1}{2}|y|^2\right), u(z)\} \\
        &\geq 0.
    \end{align*}
    So, $w^* \geq 0$, and hence $w^*$ is an admissible competitor to $u$. Since we are assuming that $v(y) < \frac{1}{2}|y|^2$ for some $y \in Du(\domain)$, there is some $z \in \domain$ such that $w^*(z) < u(z)$; in particular, this inequality holds on a positive measure subset of $\domain$. Moreover, at points of differentiability at which $w^*(z) \ne u(z)$, we have that $Dw^*(z) = z$. Therefore, we see that
    \[
    \frac{1}{2}|Dw^*(z) - z|^2  + w^*(z) < \frac{1}{2}|Du(z) - z|^2 + u(z)
    \]
    on the positive measure subset of $\domain$ for which $w^*(z) < u(z)$ and $w^*$ and $u$ are differentiable; on the other hand, we have 
    \[
    \frac{1}{2}|Dw^*(z) - z|^2  + w^*(z) \leq \frac{1}{2}|Du(z) - z|^2 + u(z)
    \]
    on all of $\domain$. Therefore, since $f \geq \lambda > 0$, we see by the definition \eqref{eq:profitfunctiondefinition} of $\Phi$ that
    \[
    \Phi(w^*) < \Phi(u),
    \]
    contradicting the minimality of $u$. 

    Now, we have established that $v(y) \geq \frac{1}{2}|y|^2$ for every $y \in Du(\domain)$. By \cite[Theorem 23.5]{Rockafellar70}, we know that 
    \[
    u(x) + v(Du(x)) \leq \langle x, Du(x) \rangle;
    \]
    therefore, using that $v(Du(x)) \geq \frac{1}{2}|Du(x)|^2$ and $u(x) \geq 0$, we conclude that 
    \[
    \frac{1}{2}|Du(x)|^2 \leq \langle x, Du(x) \rangle.
    \]
    Thus, 
    \[
    |Du(x)| \leq 2|x|,
    \]
    as desired. 
\end{proof}

The following simple lemma is well-known: we include its proof for completeness. 
\begin{lemma}\label{lemma:existenceofzeroes}
    Let $v_a$ be the minimizer of \eqref{eq:rescaledfunctional} over $\mathcal{V}$, and assume that $a \geq 1$. Then, there is some $x_a \in \overline{\domain}$ such that $v_a(x_a) = 0$.
\end{lemma}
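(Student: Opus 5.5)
Suppose, for contradiction, that $v_a > 0$ on $\overline{\domain}$. Then $m := \min_{\overline{\domain}} v_a > 0$, since $v_a$ is a continuous function on a compact set. By Lemma \ref{lemma:lipschitzbound}, applied to $u_a$ after the rescaling \eqref{eq:vadefinition}, $v_a$ is Lipschitz, so continuity up to the boundary is available. If one prefers not to rely on boundary continuity, note that convex functions are lower semicontinuous after taking their closure. Replacing $v_a$ by its lower semicontinuous envelope does not change the value of $\Psi_a$, because the two functions agree in $\domain$. Hence the infimum of $v_a$ over $\overline{\domain}$ is attained and equals some $m>0$.

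Next I would use the competitor $w := v_a - m$. It is convex and nonnegative, so $w \in \mathcal{V}$. Moreover $Dw = Dv_a$ almost everywhere, so the gradient term of \eqref{eq:rescaledfunctional} is unchanged. The only change is in the linear term:
\[
\Psi_a[w] = \Psi_a[v_a] - \frac{m}{a}\int_{\domain} f(x)\,dx < \Psi_a[v_a].
\]
Here we use $f \geq \lambda > 0$ and the fact that $\domain$ has positive measure. This contradicts the minimality of $v_a$, so $\min_{\overline{\domain}} v_a = 0$, and the minimum is attained at some $x_a \in \overline{\domain}$.

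The hypothesis $a \geq 1$ plays no essential role in this argument; it only ensures that the rescaled functional is well defined with the stated normalization. The one step needing care is that the minimum is attained on $\overline{\domain}$ rather than merely approached. This follows from the continuity of $v_a$ on $\overline{\domain}$, which comes from the Lipschitz bound of Lemma \ref{lemma:lipschitzbound}. I expect no real obstacle here.
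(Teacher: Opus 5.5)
Your argument is correct and is the paper's argument. Continuity of $v_a$ on the compact set $\overline{\domain}$ (via Lemma \ref{lemma:lipschitzbound}) gives a positive minimum $m$ if $v_a$ has no zero, and then the competitor $v_a - m$ lowers $\Psi_a$ by $\frac{m}{a}\int_{\domain} f\,dx > 0$. Your aside about the lower semicontinuous envelope is unnecessary and slightly off: it would only give a zero of the envelope, possibly on $\partial\domain$, not of $v_a$ itself, so keep the Lipschitz route.
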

\begin{proof}
    Suppose that such an $x_a$ does not exist. Then, since $v_a \in C^{0,1}(\domain)$ by Lemma \ref{lemma:lipschitzbound}, and $\overline{\domain}$ is compact, we know that there is some $\eta > 0$ such that $v_a \geq \eta > 0$ on $\domain$. In particular, $v_a - \eta$ is a valid competitor to $v_a$. But we compute
    \begin{align*}
        \Psi_{a}[v_a] - \Psi_{a}[v_a - \eta] &= \frac{\eta}{a}\int_{\domain}f(x)dx \\
        & > 0,
    \end{align*}
    contradicting the minimality of $v_a$.
\end{proof}

\begin{proposition}[Identification of the seller's market limit]\label{prop:identificationofthesellersmarketlimit}
    Take $x_0 \in \partial \domain$ such that $-\tau$ lies in the outward normal cone to $\domain$ at $x_0$. Then, as $a \to + \infty$, $v_{a}$ converges uniformly to 
    \[
    v_{\infty}(x) := \langle \tau , x - x_0\rangle.
    \]
    Moreover, $Dv_{a}$ converges uniformly on compact subsets of $\domain$ to $\tau$. 
\end{proposition}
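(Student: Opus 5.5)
Our plan is a compactness-and-$\Gamma$-limit argument on the rescaled functionals $\Psi_a$. By Lemma~\ref{lemma:lipschitzbound} applied to $u_a$ on $\domain + a\tau$, we get $|Du_a(y)| \le 2\sup_{z\in\domain+a\tau}|z| \le 2(a + R)$, where $R := \sup_{\domain}|x|$. Hence $|Dv_a| \le 2(1 + R/a)$, so $\{v_a\}_{a \ge 1}$ is uniformly Lipschitz on $\overline{\domain}$. By Lemma~\ref{lemma:existenceofzeroes}, each $v_a$ vanishes at some $x_a \in \overline{\domain}$, so the family is also uniformly bounded. Arzel\`a--Ascoli then gives subsequential uniform limits $v$, which are nonnegative and convex. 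It therefore suffices to show that every such limit equals $v_\infty$.

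To identify the limit, we compare energies. The limiting functional is
\[
\Psi_\infty[v] := \int_{\domain}\tfrac12|Dv - \tau|^2 f\,dx ,
\]
whose infimum over $\mathcal V$ is $0$, attained exactly at functions with $Dv = \tau$ a.e. and $v \ge 0$, that is, $v = \langle \tau, x\rangle + c$ with $c \ge -\min_{\overline\domain}\langle\tau,x\rangle = -\langle \tau, x_0\rangle$. The hypothesis that $-\tau$ lies in the outward normal cone at $x_0$ means precisely that $x_0$ minimizes $\langle \tau,\cdot\rangle$ over $\overline\domain$.

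Testing minimality of $v_a$ against the competitor $v_\infty \in \mathcal V$ gives
\[
\int_{\domain}\tfrac12\Bigl|Dv_a - \tfrac{x}{a} - \tau\Bigr|^2 f
+ \tfrac1a\int_{\domain} v_a f \;\le\; \Psi_a[v_\infty] = O(a^{-1}) .
\]
Since $v_a \ge 0$, this yields $\|Dv_a - \tau\|_{L^2} \to 0$, using $f \ge \lambda$ and $|x/a| \le R/a$. For a subsequential uniform limit $v$, the gradients $Dv_a$ converge weakly-$*$ to $Dv$, and so $Dv = \tau$ a.e.; hence $v = \langle\tau, x\rangle + c$. To pin down $c$, we use the zeros: along a further subsequence $x_a \to \bar x \in \overline{\domain}$, and uniform convergence gives $v(\bar x) = 0$. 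Combined with $v \ge 0$, this forces $c = -\min_{\overline\domain}\langle\tau,\cdot\rangle = -\langle\tau,x_0\rangle$. Thus $v = v_\infty$ regardless of the subsequence, and the whole family converges uniformly. Note that $v_\infty$ does not depend on which minimizing $x_0$ is chosen, since all such points share the same value of $\langle \tau, \cdot\rangle$.

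For the gradient statement, we use the standard fact that if convex functions converge locally uniformly to a differentiable convex function, their gradients converge locally uniformly on the interior. Concretely, fix a compact $K \subset \domain$ and $\delta < \mathrm{dist}(K,\partial\domain)$. For $x \in K$ and $|e| = 1$, convexity gives
\[
\langle Dv_a(x), e\rangle \le \frac{v_a(x+\delta e) - v_a(x)}{\delta} .
\]
The right-hand side tends to $\langle\tau, e\rangle$ uniformly in $x \in K$, at rate $2\|v_a - v_\infty\|_\infty/\delta$. Applying this with $\pm e$ and then letting $\delta \to 0$ slowly gives $\sup_K |Dv_a - \tau| \to 0$. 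Here $Dv_a$ is understood wherever it exists, or as any subgradient; in fact $v_a \in C^{1,1}_{\mathrm{loc}}$ by the interior regularity cited in the introduction.

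The main subtle point is fixing the additive constant $c$, and this is exactly where Lemma~\ref{lemma:existenceofzeroes} is needed: the energy bound alone only gives $Dv = \tau$. A second point to handle carefully is making the Lipschitz bound uniform in $a$ after rescaling, which works because of the factor $1/a$ in \eqref{eq:vadefinition}.
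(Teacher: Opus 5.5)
Your proof is correct and follows the same outline as the paper. Both arguments get equi-Lipschitz bounds and uniform boundedness from Lemmas~\ref{lemma:lipschitzbound} and~\ref{lemma:existenceofzeroes}, apply Arzel\`a--Ascoli, show $Dv=\tau$ for any subsequential limit, fix the constant through the zeros $x_a$, and deduce locally uniform gradient convergence from convexity. The differences are minor. You obtain $Dv_a\to\tau$ in $L^2$, with rate $O(a^{-1/2})$, by testing minimality against the single competitor $v_\infty$; the paper instead passes to the limit in $\Psi_{a_n}[v_{a_n}]\le\Psi_{a_n}[w]$ for every competitor $w$. You also prove the gradient convergence by hand rather than citing Rockafellar's Theorem~25.7, and a fixed $\delta$ already suffices there since $v_\infty$ is affine.
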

\begin{proof}  
    By Lemmas \ref{lemma:lipschitzbound} and \ref{lemma:existenceofzeroes}, the family $\{v_{a}\}_{a \geq 1}$ is uniformly bounded and equi-Lipschitz on $\domain$. So, by the Arzel\`{a}-Ascoli theorem, for any sequence $a_n \to + \infty$ we may find a subsequence (not relabelled) and a Lipschitz function $v_{\infty} : \domain \to \mathbf{R}$ such that $v_{a_n} \to v_{\infty}$ uniformly on $\overline{\domain}$. Since convexity is preserved under such limits by \cite[Theorem 10.8]{Rockafellar70}, we know that $v_{\infty}$ is convex and nonnegative. Moreover, the sequence $Dv_{a_n}$ is uniformly bounded on $\overline{\domain}$, and $Dv_{a_n} \to Dv_{\infty}$ pointwise a.e. on $\domain$; the latter claim follows from \cite[Theorem 24.5]{Rockafellar70} since $v_{\infty}$ is differentiable a.e.

    Now, for any convex and nonnegative function $w: \overline{\domain} \to \mathbf{R}$, we have by minimality that
    \[
    \Psi_{a_n}[v_{a_n}] \leq \Psi_{a_n}[w]. 
    \]
    Taking $n \to + \infty$ and applying the dominated convergence theorem, we conclude that
    \[
    \int_{\domain}\frac{1}{2}|Dv_{\infty} - \tau|^2 f(x) dx \leq \int_{\domain}\frac{1}{2}|Dw - \tau|^2 f(x)dx,
    \]
    so we see that $v_{\infty}$ minimizes the functional
    \[
    \Psi_{\infty}[v] := \int_{\domain}\frac{1}{2}|Dv - \tau|^2 f(x)dx
    \]
    over the set of all convex and nonnegative $v: \overline{\domain} \to \mathbf{R}$. In particular, we have that 
    \[
    \int_{\domain}|Dv_{\infty} - \tau|^2 f(x) dx = 0. 
    \]
    Therefore, $Dv_{\infty}(x) = \tau$ for a.e. $x \in \domain$; since $v_{\infty}$ is convex, this implies that $Dv_{\infty}(x) = \tau$ for every $x \in \domain$. 

    Now, by Lemma \ref{lemma:existenceofzeroes} and the fact that $\overline{\domain}$ is compact, we know that there is some $x_0 \in \overline{\domain}$ such that $v_{\infty}(x_0) = 0$. Therefore,
    \[
    v_{\infty}(x) = \langle\tau , x - x_0\rangle . 
    \]
    Moreover, since $v_{\infty} \geq 0$ on $\domain$, we must necessarily have that $-\tau$ lies in the outward normal cone to $\domain$ at $x_0$; notice that $v_{\infty}$ will be independent of the choice of $x_0$ satisfying this property. Therefore, since the initial sequence was arbitrary, we conclude that $v_{a}$ converges uniformly to $v_{\infty}$ as $a \to + \infty$. Since $v_{\infty}$ is differentiable everywhere on $\domain$, \cite[Theorem 25.7]{Rockafellar70} then implies that $Dv_{a}$ converges uniformly on compact subsets of $\domain$ to $\tau$. 
\end{proof}

\section{A monotonicity formula}

We are interested in obtaining control on the rate at which $v_a$ converges to the limit in Proposition \ref{prop:identificationofthesellersmarketlimit}. To this end, we now introduce a monotonicity formula.

\begin{theorem}[Monotonicity formula]\label{theorem:monotonicityformula}
    Let $\domain \subseteq \mathbf{R}^n$ be open, bounded, and convex, and assume that $0 \in \partial \domain$. Let $\tau \in \mathbf{S}^{n-1}$ be such that $-\tau$ lies in the outward normal cone to $\partial \domain$ at $0$. Assume that $f \in C^{0,1}(\domain)$ satisfies $f \geq \lambda > 0$ on $\overline{\domain}$. For each $a > 1$, let $v_{a}$ be the minimizer of \eqref{eq:rescaledfunctional} over all convex and nonnegative $v: \overline{\domain} \to \mathbf{R}$. Define
    \begin{equation}\label{eq:monotonequantitydefinition}
        \Xi(a) := \int_{\domain}\left[ (\langle Dv_a, x \rangle - v_a) - \frac{a}{2}|Dv_a - \tau|^2\right]f(x)dx.
    \end{equation}
    Then, 
    \begin{equation}\label{eq:formulaforderivativeofXi}
        \frac{d}{da}\Xi(a) = -\frac{1}{2}\int_{\domain}|Dv_a - \tau|^2 f(x)dx;
    \end{equation}
    in particular, $\Xi(a)$ is monotonically decreasing. Moreover, we have that 
    \[
    \lim_{a \to + \infty}\Xi(a) = 0. 
    \]
\end{theorem}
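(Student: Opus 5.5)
The plan is to recognize $\Xi(a)$, up to an explicit term, as the optimal value of a family of variational problems that depends affinely on $a$, and then to use an envelope argument. First expand the integrand of $\Psi_a$:
\[
\tfrac12|Dv - \tfrac{x}{a} - \tau|^2 = \tfrac12|Dv-\tau|^2 - \tfrac1a\langle Dv - \tau, x\rangle + \tfrac{|x|^2}{2a^2}.
\]
This gives $a\Psi_a[v] = F(a,v) + \int_{\domain}\bigl(\langle \tau,x\rangle + \frac{|x|^2}{2a}\bigr)f\,dx$, where
\[
F(a,v) := \int_{\domain}\Bigl[\tfrac a2|Dv-\tau|^2 - (\langle Dv,x\rangle - v)\Bigr]f(x)\,dx .
\]
For fixed $a$, minimizing $\Psi_a$ over $\mathcal{V}$ is therefore the same as minimizing $F(a,\cdot)$, so $v_a$ minimizes $F(a,\cdot)$. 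Set $H(a) := \min_{\mathcal V} F(a,\cdot) = F(a,v_a)$. Then by definition $\Xi(a) = -H(a)$, and the derivative formula \eqref{eq:formulaforderivativeofXi} is equivalent to $H'(a) = \frac12\int_{\domain}|Dv_a-\tau|^2 f\,dx$.

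To prove this, note that $a\mapsto F(a,v)$ is affine for each fixed $v$, so $H$ is an infimum of affine functions and hence concave. Comparing with the competitors $v_a$ and $v_b$ yields the two-sided inequalities
\[
(b-a)\,\tfrac12\!\int|Dv_b-\tau|^2 f \;\le\; H(b)-H(a) \;\le\; (b-a)\,\tfrac12\!\int|Dv_a-\tau|^2 f .
\]
Indeed, $H(b)\le F(b,v_a) = H(a) + (b-a)\frac12\int|Dv_a-\tau|^2f$, and symmetrically with $a$ and $b$ exchanged. Dividing by $b-a$ and letting $b\to a^\pm$, differentiability everywhere follows once we know that $a\mapsto \int|Dv_a-\tau|^2 f$ is continuous.

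I expect this continuity step to be the main technical point. I would argue by compactness exactly as in Proposition \ref{prop:identificationofthesellersmarketlimit}. For $b\to a$, Lemmas \ref{lemma:lipschitzbound} and \ref{lemma:existenceofzeroes} make $\{v_b\}$ uniformly bounded and equi-Lipschitz. So a subsequence converges uniformly to a convex, nonnegative $\bar v$, with $Dv_b\to D\bar v$ a.e. (by \cite[Theorem 24.5]{Rockafellar70}) and dominated by a constant. Passing to the limit in $\Psi_b[v_b]\le \Psi_b[w]$ shows that $\bar v$ minimizes $\Psi_a$. The functional $\Psi_a$ is strictly convex in $Dv$ and linear in $v$, so its minimizer is unique; this gives $\bar v = v_a$. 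Dominated convergence then gives $\int|Dv_b-\tau|^2f\to\int|Dv_a-\tau|^2f$ along the full family. Monotonicity of $\Xi$ is then immediate from the sign of $\Xi'$.

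For the limit, use the competitor $w(x)=\langle\tau,x\rangle$. It is convex, and it is nonnegative on $\overline{\domain}$ precisely because $-\tau$ lies in the outward normal cone at $0\in\partial\domain$. Since $\langle Dw,x\rangle - w = 0$ and $Dw=\tau$, we have $F(a,w)=0$. Minimality of $v_a$ then gives $F(a,v_a)\le 0$, that is,
\[
0\le \tfrac a2\int|Dv_a-\tau|^2f \le \int(\langle Dv_a,x\rangle - v_a)f .
\]
This shows $0\le \Xi(a)\le \int(\langle Dv_a,x\rangle-v_a)f$. By Proposition \ref{prop:identificationofthesellersmarketlimit} with $x_0=0$, we have $v_a\to\langle\tau,x\rangle$ uniformly and $Dv_a\to\tau$ a.e. with uniform bounds. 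By dominated convergence the right-hand side therefore tends to $\int(\langle\tau,x\rangle-\langle\tau,x\rangle)f=0$, so $\Xi(a)\to0$.
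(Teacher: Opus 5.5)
Your proof is correct, and it departs from the paper's in a useful way.

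For the derivative formula you use the same two comparisons as the paper: $v_{a+h}$ tested in $\Psi_a$, and $v_a$ tested in $\Psi_{a+h}$. Your rewriting $a\Psi_a[v]=F(a,v)+(\text{terms independent of } v)$, with $F$ affine in $a$, turns them into a Danskin-type sandwich for $\Xi=-\min F$. The paper instead divides its two inequalities by $h$ and lets $h\to0$. This produces an identity between $\frac{d}{da}\int\frac12|Dv_a-\tau|^2f$ and $\frac{d}{da}\int(\langle Dv_a,x\rangle-v_a)f$, tacitly assuming both quantities are differentiable in $a$. Your sandwich needs only continuity of $a\mapsto\int|Dv_a-\tau|^2f$. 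You actually prove that continuity by compactness and uniqueness of the minimizer: strict convexity in $Dv$ gives equal gradients, and the term $\frac1a\int vf$ rules out an additive constant. So your version supplies a step the paper leaves implicit, and it shows in passing that this integral is nonincreasing in $a$.

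For the limit, the paper integrates $\frac{d}{da}\bigl(\Xi(a)/a\bigr)=-a^{-2}\int(\langle Dv_a,x\rangle-v_a)f$ over $[a,\infty)$. It then bounds the result by $\sup_{t\ge a}\bigl|\int(\langle Dv_t,x\rangle-v_t)f\bigr|$, so its conclusion rests on the derivative formula. You instead compare with the admissible competitor $\langle\tau,x\rangle$, for which $F\bigl(a,\langle\tau,x\rangle\bigr)=0$, and get $0\le\Xi(a)\le\int(\langle Dv_a,x\rangle-v_a)f$ directly. That route is shorter and independent of any differentiability in $a$. It also yields $\Xi\ge0$, which the paper only obtains later in the proof of Proposition~\ref{prop:boundonl2convergence} and which is needed for the square roots in Theorem~\ref{theorem:intromainresults}.
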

\begin{proof}
    First, we claim that 
    \begin{equation}\label{eq:startingderivativeequation}
        \frac{d}{da}\int_{\domain}\frac{1}{2}|Dv_a - \tau|^2f(x) dx - \frac{1}{a}\frac{d}{da}\int_{\domain}\left(\langle Dv_a, x\rangle - v_a\right)f(x) dx = 0. 
    \end{equation}
    Indeed, fix $a > 1$, and let $h \in \mathbf{R}$ be sufficiently small that $a + h \geq 1$. Since 
    $\Psi_{a}[v_a] \leq \Psi_{a}[v_{a + h}]$, we have 
    \[
    \int_{\Omega}\left[\frac{1}{2}(|Dv_a - \tau - \frac{x}{a}|^2 - |Dv_{a + h} - \tau - \frac{x}{a}|^2) + \frac{1}{a}(v_a - v_{a + h})\right]f(x) dx \leq 0,
    \]
    which simplifies to 
    \begin{align*}
        0&\geq \int_{\domain}\frac{1}{2}(|Dv_a - \tau|^2 - |Dv_{a + h} - \tau|^2) f(x) dx \\
        &\qquad - \frac{1}{a}\int_{\domain}(\langle Dv_{a} - Dv_{a+h}, x \rangle - (v_a - v_{a + h}))f(x) dx. 
    \end{align*}
    Instead using that $\Psi_{a + h}[v_a] \geq \Psi_{a + h}[v_{a+h}]$, we get that 
    \begin{align*}
        0&\leq \int_{\domain}\frac{1}{2}(|Dv_a - \tau|^2 - |Dv_{a + h} - \tau|^2) f(x) dx \\
        &\qquad - \frac{1}{a + h}\int_{\domain}(\langle Dv_{a} - Dv_{a+h}, x \rangle - (v_a - v_{a + h})f(x) dx. 
    \end{align*}
    Therefore, if we divide by $h$ and take $h \to 0$, we find that 
    \[
    \frac{d}{da}\int_{\domain}\frac{1}{2}|Dv_a - \tau|^2f(x) dx - \frac{1}{a}\frac{d}{da}\int_{\domain}\left(\langle Dv_a, x\rangle - v_a\right)f(x) dx = 0,
    \]
    as claimed. Now, using \eqref{eq:startingderivativeequation}, we compute that
    \begin{align*}
        \frac{d}{da}\Xi(a) &= -\frac{1}{2}\int_{\domain}|Dv_{a} - \tau|^2 f(x) dx \\
        &\qquad - a\frac{d}{da}\int_{\domain}\frac{1}{2}|Dv_a - \tau|^2f(x) dx + \frac{d}{da}\int_{\domain}\left(\langle Dv_a, x\rangle - v_a\right)f(x) dx \\
        &= -\frac{1}{2}\int_{\domain}|Dv_{a} - \tau|^2 f(x) dx,
    \end{align*}
    proving \eqref{eq:formulaforderivativeofXi}.

    Now, we wish to show that $\lim_{a \to + \infty}\Xi(a) = 0$. Observe that \eqref{eq:formulaforderivativeofXi} implies that 
    \begin{align*}
        \frac{d}{da}\frac{1}{a}\Xi(a) &= -\frac{1}{2a}\int_{\domain}|Dv_a - \tau|^2 f(x)dx - \frac{1}{a^2}\Xi(a) \\
        &= -\frac{1}{a^2}\int_{\domain}(\langle Dv_a , x \rangle - v_a)f(x)dx. 
    \end{align*}
    Therefore, for any $b > a$, we have
    \[
    \frac{1}{b}\Xi(b) - \frac{1}{a}\Xi(a) = -\int_{a}^b \frac{1}{t^2}\int_{\domain}(\langle Dv_t , x \rangle - v_t)f(x)dx dt. 
    \]
    By Proposition \ref{prop:identificationofthesellersmarketlimit}, we know that $Dv_a \to \tau$ uniformly on compact subsets of $\domain$; since $Dv_a$ is uniformly bounded by Lemma \ref{lemma:lipschitzbound}, we may apply the dominated convergence theorem to conclude that 
    \[
    \lim_{b \to + \infty}\frac{1}{b}\Xi(b) = 0. 
    \]
    Therefore, we see that 
    \[
    \frac{1}{a}\Xi(a) = \int_{a}^{\infty}\frac{1}{t^2}\int_{\domain}(\langle Dv_t, x\rangle - v_t)f(x) dx. 
    \]
    In particular, if we let 
    \[
    \beta(a) := \sup_{t \geq a}\left|\int_{\domain}(\langle Dv_t, x \rangle - v_t)f(x)dx\right|, 
    \]
    then we conclude that 
    \begin{equation}\label{eq:boundsonXi}
        - \frac{2\beta(a)}{a} \leq \frac{1}{a}\Xi(a) \leq \frac{2\beta(a)}{a}.
    \end{equation}
    By Proposition \ref{prop:identificationofthesellersmarketlimit}, $v_t \to \langle\tau,  x\rangle $ uniformly on $\domain$, while $Dv_t \to \tau$ locally uniformly on $\domain$. Thus, by the dominated convergence theorem, $\beta(a) \to 0$ as $a \to + \infty$. In particular, by \eqref{eq:boundsonXi}, we conclude that 
    \[
    \lim_{a \to + \infty}\Xi(a) = 0.
    \]
\end{proof}

\section{Bound on $L^2$ convergence and exclusion region}

Using Theorem \ref{theorem:monotonicityformula}, we will now bound the $L^2$ convergence rate of $Dv_a$ to $\tau$ and the volume of the exclusion region $Du_a^{-1}\{0\}$. First, we will prove the profit formula Theorem \ref{theorem:intromainresults} (1); this will provide a reinterpretation of $\Xi(a)$ which will be helpful in bounding the rate of convergence of $Dv_a$ to $\tau$. 

\begin{lemma}[Profit formula]\label{lemma:profitformula}
     Take the assumptions of Theorem \ref{theorem:monotonicityformula}, let $\Pi_a$ be defined as in \eqref{eq:profitofua}, let $\Psi_a$ be defined as in \eqref{eq:rescaledfunctional}, and let $\Xi(a)$ be defined as in \eqref{eq:monotonequantitydefinition}. Then, we have that 
    \begin{equation}\label{eq:reinterpretingXiintermsofPsi}
        \frac{1}{a}\Xi(a) = \Psi_a(\frac{1}{2a}|x|^2 + \langle \tau, x \rangle) - \Psi_a(v_a). 
    \end{equation}
    In particular, this implies that 
   \begin{equation}\label{eq:profitformula}
       \Pi_a[u_a]= \frac{a^2}{2}\int_{\domain}f(x)dx + a \Xi(a). 
   \end{equation}
\end{lemma}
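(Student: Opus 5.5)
The identity \eqref{eq:reinterpretingXiintermsofPsi} is a direct computation, and the profit formula \eqref{eq:profitformula} then follows from the change of variables $y = x - a\tau$ relating $u_a$ to $v_a$. No minimality or regularity input is needed beyond $Dv_a$ being bounded, so that all integrals are finite (Lemma \ref{lemma:lipschitzbound}).

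\textbf{Step 1: evaluate $\Psi_a$ at $w$.} Set $w(x) := \frac{1}{2a}|x|^2 + \langle \tau, x\rangle$. Then $Dw = \frac{x}{a} + \tau$, so the quadratic term in \eqref{eq:rescaledfunctional} vanishes identically, and
\[
\Psi_a(w) = \int_{\domain}\Big(\frac{|x|^2}{2a^2} + \frac{\langle\tau,x\rangle}{a}\Big) f(x)\,dx .
\]
The identity \eqref{eq:reinterpretingXiintermsofPsi} does not need $w$ to be admissible. In fact it is: $w$ is convex, and $w \geq 0$ on $\domain$ because $\langle \tau, x\rangle \geq 0$ there, since $-\tau$ lies in the outward normal cone at $0 \in \partial\domain$.

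\textbf{Step 2: expand $\Psi_a(v_a)$.} Writing $Dv_a - \frac{x}{a} - \tau = (Dv_a - \tau) - \frac{x}{a}$ gives
\[
\Psi_a(v_a) = \int_{\domain}\Big[\frac{1}{2}|Dv_a - \tau|^2 - \frac{1}{a}\langle Dv_a - \tau, x\rangle + \frac{|x|^2}{2a^2} + \frac{1}{a}v_a\Big] f\,dx .
\]
Subtracting this from $\Psi_a(w)$, the $\frac{|x|^2}{2a^2}$ terms cancel and the $\frac{1}{a}\langle \tau, x\rangle$ terms cancel. What remains is
\[
\int_{\domain}\Big[\frac{1}{a}(\langle Dv_a, x\rangle - v_a) - \frac{1}{2}|Dv_a-\tau|^2\Big] f\,dx ,
\]
which equals $\frac{1}{a}\Xi(a)$ by \eqref{eq:monotonequantitydefinition}.

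\textbf{Step 3: rescale to get the profit formula.} By \eqref{eq:vadefinition}, $u_a(y + a\tau) = a\,v_a(y)$ and $Du_a(y+a\tau) = a\,Dv_a(y)$. Hence, with $x = y + a\tau$,
\[
|Du_a(x) - x|^2 = a^2\Big|Dv_a(y) - \frac{y}{a} - \tau\Big|^2, \qquad u_a(x) = a\, v_a(y),
\]
and therefore $\Phi_a[u_a] = a^2\Psi_a[v_a]$. The same substitution in the first term of $\Pi_a$, taken with the factor $\frac{1}{2}$ as in the introduction's definition of $\Pi$, gives
\[
\frac{1}{2}\int_{\domain}|y + a\tau|^2 f(y)\,dy = a^2\,\Psi_a(w) + \frac{a^2}{2}\int_{\domain} f .
\]
Combining this with Step 2 yields $\Pi_a[u_a] = a^2\big(\Psi_a(w) - \Psi_a(v_a)\big) + \frac{a^2}{2}\int_{\domain} f = a\,\Xi(a) + \frac{a^2}{2}\int_{\domain} f$.

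\textbf{Main obstacle: normalization.} There is no real analytic difficulty; the only delicate point is bookkeeping. First, \eqref{eq:profitofua} as displayed omits the factor $\frac{1}{2}$ in front of $\int |x|^2 f(x-a\tau)\,dx$. I would read it consistently with the definition of $\Pi$ in the introduction, since only then does the constant come out as $\frac{a^2}{2}\int_{\domain} f$. Second, the normalization $x_0 = 0$ of Theorem \ref{theorem:monotonicityformula} is used to make $w \geq 0$; this is convenient but not needed for the identity itself.
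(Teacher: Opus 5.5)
Your proof is correct and follows the paper's argument. The paper rewrites $\Xi(a)$ via the expansion of $|Dv_a-\tau|^2$ around $Dv_a-\tau-\frac{x}{a}$ to obtain $a\bigl(\Psi_a(w)-\Psi_a(v_a)\bigr)$, then evaluates $\Psi_a(w)$ and changes variables; this is the same algebra as your Steps 1--3, run in the opposite direction. Your reading of \eqref{eq:profitofua} with the factor $\frac{1}{2}$ is also exactly what the paper's own proof implicitly uses.
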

\begin{proof}
    Using the fact that 
    \[
    |Dv_a - \tau|^2 = |Dv_a - \tau - \frac{x}{a}|^2 + 2\langle Dv_a - \tau, \frac{x}{a}\rangle - \frac{|x|^2}{a^2}, 
    \]
    observe that 
    \begin{align*}
        \Xi(a) &= \int_{\domain}\left[(\frac{1}{2a}|x|^2 + \langle \tau, x \rangle - v_a) - \frac{a}{2}|Dv_a - \tau - \frac{x}{a}|^2\right]f(x) dx \\
        &= a\left(\Psi_a(\frac{1}{2a}|x|^2 + \langle \tau, x \rangle) - \Psi_a(v_a)\right),
    \end{align*}
    proving \eqref{eq:reinterpretingXiintermsofPsi}. Now, notice that
    \begin{align*}
        \Psi_a(\frac{1}{2a}|x|^2 + \langle \tau, x \rangle) &=  
        \int_{\domain}(\frac{1}{2a^2}|x|^2 + \langle \tau, \frac{x}{a} \rangle)f(x)dx \\
        &=
        \int_{\domain}(\frac{1}{2}|\frac{x}{a} + \tau|^2 - \frac{1}{2}|\tau|^2)f(x) dx. 
    \end{align*}
    So, multiplying \eqref{eq:reinterpretingXiintermsofPsi} by $a^2$ and applying the change of variables $y = x + a\tau$, we conclude that 
    \[
    a\Xi(a) = \int_{\domain + a \tau}\frac{1}{2}|y|^2 f(y - a \tau) dy - \frac{a^2}{2}\int_{\domain}f(x) dx - \Phi_a[u_a],
    \]
    proving \eqref{eq:profitformula}. 
\end{proof}

Now, let us recall the following lemma. Given a convex function $v \in C^{0,1}(\domain)$, denote by $D^-v \in L^1(\partial \domain, \mathcal{H}^{n-1})$ the \textit{inner boundary trace} of $Dv$; see \cite[Definition 5.3]{EvansGariepy92}, \cite[eq.(4)]{McCannRankinZhang24+}.

\begin{lemma}\label{lemma:firstvariationmeasure}
    Let $\Psi_a$ be as defined in \eqref{eq:rescaledfunctional}, and let $v, w : \domain \to \mathbf{R}$ be arbitrary functions such that $w \in C^{0,1}(\domain)$, $v \in C^{0,1}(\domain) \cap C^{1,1}_{\mathrm{loc}}(\domain)$, and $v$ is convex. Assume that $f \in C^{0,1}(\domain)$ satisfies $f \geq \lambda > 0$ on $\domain$. Define a differential operator $\mathcal{L}_f$ by 
    \begin{equation}\label{eq:Lfdefinition}
        \mathcal{L}_f \varphi := \Delta \varphi + \langle D \varphi, D(\log f)\rangle
    \end{equation}
    for any $\varphi \in C^{\infty}(\domain)$. Then, we have that 
    \begin{align*}
         \Psi_a[v + w] - \Psi_a[v] &= \int_{\domain}w(\mathcal{L}_f( \frac{|x|^2}{2a} + \langle x, \tau \rangle - v) + \frac{1}{a})f(x) dx \\
         &\qquad + \frac{1}{2}\int_{\domain}|Dw|^2 f(x)dx \\
         &\qquad + \int_{\partial \domain}w\langle D^-v - \frac{x}{a} - \tau, \mathbf{n}\rangle  f(x) \, d\mathcal{H}^{n-1} 
    \end{align*}
    where $\mathbf{n}$ is the outward unit normal vector to $\partial \domain$. 
\end{lemma}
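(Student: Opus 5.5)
Expand the quadratic exactly and integrate the linear term by parts; no minimality of $v$ is used, so the formula is a pure identity.

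Write $g(x) := Dv - \frac{x}{a} - \tau$. Then
\[
\frac{1}{2}|g + Dw|^2 - \frac{1}{2}|g|^2 = \langle g, Dw\rangle + \frac{1}{2}|Dw|^2,
\]
and the zeroth-order part of $\Psi_a$ contributes $\frac{1}{a}w$. Hence
\[
\Psi_a[v+w]-\Psi_a[v] = \int_\domain \langle g, Dw\rangle f\,dx + \frac{1}{2}\int_\domain |Dw|^2 f\,dx + \frac{1}{a}\int_\domain w f\,dx .
\]
The second and third terms already appear in the claimed formula. It remains to rewrite $\int_\domain \langle g, Dw\rangle f\,dx$ using the divergence theorem.

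The key identity is
\[
\mathrm{div}(f g) = f\left(\mathrm{div}\, g + \langle g, D\log f\rangle\right) = -f\,\mathcal{L}_f\left(\frac{|x|^2}{2a} + \langle x,\tau\rangle - v\right),
\]
since $g = -D\left(\frac{|x|^2}{2a} + \langle x,\tau\rangle - v\right)$. Formally, integrating $\langle fg, Dw\rangle = \mathrm{div}(wfg) - w\,\mathrm{div}(fg)$ then gives
\[
\int_\domain \langle g, Dw\rangle f\,dx = \int_\domain w\,\mathcal{L}_f\left(\frac{|x|^2}{2a}+\langle x,\tau\rangle - v\right) f\,dx + \int_{\partial\domain} w\,\langle g,\mathbf{n}\rangle f\, d\mathcal{H}^{n-1}.
\]
On $\partial\domain$, $g$ is replaced by its inner trace $D^-v - \frac{x}{a} - \tau$. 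Summing the three pieces yields the statement.

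The main obstacle is justifying the integration by parts at this regularity. Here $v \in C^{1,1}_{\mathrm{loc}}$, so $D^2 v \in L^\infty_{\mathrm{loc}}$ and $\Delta v$ is defined a.e. Also $f$ is Lipschitz and bounded below, so $D\log f \in L^\infty$. However, $D^2 v$ need not be integrable up to $\partial\domain$, and $Dv$ is only BV-like near the boundary. I would handle this as follows.

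\begin{enumerate}
\item Exhaust $\domain$ by convex subdomains $\domain_t = x_* + t(\domain - x_*)$ with $t \uparrow 1$. On each $\domain_t$, $v$ is $C^{1,1}$ and $fg\,w$ is Lipschitz, so the divergence theorem holds exactly.
\item Pass $t \to 1$ in the interior integral. Convexity gives $\Delta v \geq 0$, and the remaining integrand is bounded, so monotone convergence applies. The bulk term could a priori be $+\infty$; in that case the boundary term compensates, and the identity holds in the extended sense.
\item Pass $t \to 1$ in the boundary integral. Pull the integrals on $\partial\domain_t$ back to $\partial\domain$. Since $Dv$ is bounded by the Lipschitz bound, the convergence of $Dv$ on $\partial\domain_t$ to the inner trace $D^-v$ in $L^1(\partial\domain)$ follows from the definition of the trace for BV vector fields \cite[Definition 5.3]{EvansGariepy92}. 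Here $Dv \in BV_{\mathrm{loc}}$, with bounded variation near $\partial\domain$ by convexity and boundedness of $Dv$. Alternatively, one can invoke the Gauss–Green formula for $BV$ fields directly, treating $\mathrm{div}(Dv)$ as a measure with no singular part in the interior, since $v \in C^{1,1}_{\mathrm{loc}}$.
\end{enumerate}

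The remaining terms involve only Lipschitz functions, so they converge trivially.
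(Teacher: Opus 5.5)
Your main line is the paper's proof. You expand the square, use $\mathrm{div}(wfg)=f\langle g,Dw\rangle-wf\,\mathcal{L}_f\phi$ with $g=Dv-\frac{x}{a}-\tau=-D\phi$ and $\phi:=\frac{|x|^2}{2a}+\langle x,\tau\rangle-v$, and apply Gauss--Green with the inner trace $D^-v$. The paper simply cites the BV divergence theorem \cite[Theorem 5.6]{EvansGariepy92}, which is the alternative you mention at the end of step 3. The regularity argument you add, however, fails as written in step 2. Since $w$ is arbitrary and may change sign, $\int_{\domain_t}wf\Delta v\,dx$ is not monotone in $t$. Monotone convergence therefore only treats $w^{+}$ and $w^{-}$ separately and leaves a possible $\infty-\infty$. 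Your fallback is not coherent either. The left-hand side, the Dirichlet term and the boundary term are all finite, since every integrand is bounded. So an infinite bulk term cannot be ``compensated''; it would simply make the lemma false. Proposition \ref{prop:boundonl2convergence} also needs the identity with a finite bulk term.

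The missing observation is that the bulk term is always finite, and your step 1 already gives it. Apply the divergence theorem on $\domain_t$ to the field $Dv$: $\int_{\domain_t}\Delta v\,dx=\int_{\partial\domain_t}\langle Dv,\mathbf{n}\rangle\,d\mathcal{H}^{n-1}\le\|Dv\|_{L^\infty}\,t^{n-1}\mathcal{H}^{n-1}(\partial\domain)$. Since $\Delta v\ge 0$ a.e., monotone convergence gives $\Delta v\in L^1(\domain)$. Since $D^2v\ge 0$, also $|D^2v|\le\Delta v$, so $Dv\in W^{1,1}(\domain)$. Then $wf\,\mathcal{L}_f\phi$ is a bounded function minus $wf\Delta v$, hence dominated by an $L^1(\domain)$ function, and dominated convergence closes step 2.

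The same bound is what step 3 actually needs. Integrating $D^2v$ along rays from $x_*$ shows that the pulled-back values of $Dv$ on $\partial\domain_t$ are Cauchy in $L^1(\partial\domain)$. This takes a short argument; it does not follow from the definition of the trace alone. The bound is also what makes the BV Gauss--Green formula applicable. In particular, your assertion that $D^2v$ need not be integrable up to $\partial\domain$ is false under the hypotheses. It also contradicts your own claim in step 3 that $Dv$ has bounded variation near $\partial\domain$: a field in $BV(\domain)$ that is locally Lipschitz in $\domain$ has derivative in $L^1(\domain)$.
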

\begin{proof}
    Indeed, by definition, we have 
    \begin{align*}
        &\Psi_a[v + w] - \Psi_a[v] \\ 
        &\qquad = \int_{\domain}\left[\frac{1}{2}(|Dv + Dw - \frac{x}{a} - \tau|^2 - |Dv - \frac{x}{a} - \tau|^2) + \frac{w}{a}\right]f(x) dx \\
        &\qquad = \int_{\domain}\left[\langle Dw,  Dv - \frac{x}{a} - \tau \rangle + \frac{1}{2}|Dw|^2 + \frac{w}{a}\right]f(x) dx.
    \end{align*}
    Observe that 
    \begin{align*}
        \mathrm{div}(wf(Dv - \frac{x}{a} - \tau)) &= \langle Dw, Dv - \frac{x}{a} - \tau \rangle f - wf \mathcal{L}_f( \frac{1}{2a}|x|^2 + \langle x, \tau \rangle - v).
    \end{align*}
    So, by the divergence theorem \cite[Theorem 5.6]{EvansGariepy92}, we have 
    \begin{align*} \int_{\domain}\langle Dw,  Dv - \frac{x}{a} - \tau \rangle f(x) dx &= \int_{\domain}w (\mathcal{L}_f( \frac{|x|^2}{2a} + \langle x, \tau \rangle  - v))f(x) dx \\
    &\qquad + \int_{\partial \domain}w\langle D^-v - \frac{x}{a} - \tau, \mathbf{n}\rangle  f(x) \, d\mathcal{H}^{n-1};
    \end{align*}
    this proves the desired formula. 
\end{proof}

We may now quantify the $L^2$ convergence rate of $v_a$ to $v_{\infty}$. Recall that a function $f$ is called log-concave if $f(x) = e^{- g(x)}$ for some convex function $g$. 

\begin{proposition}[Bound on $L^2$ convergence]\label{prop:boundonl2convergence}
    Take the assumptions of Theorem \ref{theorem:monotonicityformula}, and let $\Xi(a)$ be defined as in \eqref{eq:monotonequantitydefinition}. Then, for all $a \geq 1$, we have that 
    \begin{equation}\label{eq:boundonl2convergence}
       \frac{1}{2}\int_{\domain}|Dv_a - \tau - \frac{x}{a}|^2 f(x) dx \leq \frac{\Xi(a)}{a}.
    \end{equation}
    If we assume in addition that $f$ is log-concave, then we have that 
    \begin{equation}\label{eq:L2convergenceofva}
        \int_{\domain}|v_a - \frac{1}{2a}|x|^2 - \langle \tau, x \rangle + \frac{1}{a}c_a|^2 f(x) dx \leq 2C\frac{\Xi(a)}{a},
    \end{equation}
    where $C$ is a Poincar\'{e} constant for $\frac{1}{\int_{\domain}f(x)dx}fd \mathcal{H}^n|_{\domain}$, and
    \[
    c_a := \frac{1}{\int_{\domain}f(x) dx}\left(a\Xi(a) + \frac{a^2}{2}\int_{\domain}|Dv_a - \tau - \frac{x}{a}|^2 f(x) dx\right).
    \]
\end{proposition}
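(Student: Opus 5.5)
The plan is to compare $v_a$ with the explicit competitor
\[
w_0(x) := \frac{1}{2a}|x|^2 + \langle \tau, x\rangle,
\]
whose $\Psi_a$-excess over $v_a$ equals $\Xi(a)/a$ by \eqref{eq:reinterpretingXiintermsofPsi} in Lemma \ref{lemma:profitformula}. The first step is to check that $w_0 \in \mathcal{V}$. It is clearly convex. For nonnegativity: since $0 \in \partial\domain$ and $-\tau$ lies in the outward normal cone at $0$, we have $\langle -\tau, x - 0\rangle \leq 0$ on $\overline{\domain}$, so $\langle \tau, x\rangle \geq 0$ and hence $w_0 \geq 0$.

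Next I expand $\Psi_a$ around $v_a$ in the direction $w := w_0 - v_a$, using the quadratic identity from the first lines of the proof of Lemma \ref{lemma:firstvariationmeasure}:
\[
\Psi_a[v_a + w] - \Psi_a[v_a] = \int_{\domain}\Big[\langle Dw, Dv_a - \tfrac{x}{a} - \tau\rangle + \tfrac{w}{a}\Big] f\,dx + \frac12\int_{\domain}|Dw|^2 f\,dx .
\]
The integration by parts in that lemma is not needed. The first (linear) term is nonnegative, because $\mathcal{V}$ is convex. Indeed, $v_a + t w \in \mathcal{V}$ for $t \in [0,1]$, and $t \mapsto \Psi_a[v_a + tw]$ is a quadratic polynomial minimized at $t=0$ on $[0,1]$, so its derivative at $t=0$ (which is exactly the linear term) is $\geq 0$. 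Since $Dw = \tau + \frac{x}{a} - Dv_a$, this yields
\[
\frac{\Xi(a)}{a} = \Psi_a[w_0] - \Psi_a[v_a] \geq \frac12 \int_{\domain}|Dv_a - \tau - \tfrac{x}{a}|^2 f\,dx,
\]
which is \eqref{eq:boundonl2convergence}.

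For \eqref{eq:L2convergenceofva}, set $g := v_a - w_0$, which is Lipschitz, and write $M := \int_{\domain} f\,dx$. Because $f$ is log-concave and $\domain$ is convex and bounded, the probability measure $\frac{1}{M} f\,d\mathcal{H}^n|_{\domain}$ is log-concave and therefore satisfies a Poincar\'{e} inequality with some constant $C$ (by the standard Payne--Weinberger/Bobkov-type results). Multiplying through by $M$ gives
\[
\int_{\domain}|g - \bar g|^2 f\,dx \leq C\int_{\domain}|Dg|^2 f\,dx \leq 2C\frac{\Xi(a)}{a}.
\]
Here $\bar g$ is the $f$-weighted mean of $g$, and the last inequality is the bound just proved.

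It remains to identify $\bar g$. Rewriting the definition \eqref{eq:monotonequantitydefinition} exactly as in the proof of Lemma \ref{lemma:profitformula} gives
\[
\Xi(a) = \int_{\domain}\Big[(w_0 - v_a) - \tfrac{a}{2}|Dv_a - \tau - \tfrac{x}{a}|^2\Big] f\,dx .
\]
Hence
\[
\int_{\domain}(w_0 - v_a) f\,dx = \Xi(a) + \frac{a}{2}\int_{\domain}|Dv_a - \tau - \tfrac{x}{a}|^2 f\,dx = \frac{M c_a}{a},
\]
so $\bar g = -c_a/a$. Substituting this into the Poincar\'{e} bound gives \eqref{eq:L2convergenceofva}.

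I expect no serious obstacle. The only points needing care are the admissibility of $w_0$, which uses the normal-cone hypothesis, and quoting an appropriate Poincar\'{e} inequality for log-concave measures on convex domains.
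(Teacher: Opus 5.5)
Your proof is correct and follows the paper's argument. You use the same competitor $\frac{1}{2a}|x|^2 + \langle \tau, x\rangle$, the nonnegativity of the one-sided first variation along the segment toward it, the Poincar\'{e} inequality for the log-concave measure, and the identification of the weighted mean via the rewriting of $\Xi(a)$ from Lemma \ref{lemma:profitformula}; the only difference is that you work directly with the quadratic expansion rather than passing through the integration by parts of Lemma \ref{lemma:firstvariationmeasure}, which the paper performs but does not actually need, so your route also avoids its regularity and boundary-trace requirements.
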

\begin{proof}
    Apply Lemma \ref{lemma:firstvariationmeasure} with $v = v_a$ and $w = \frac{1}{2a}|x|^2 + \langle \tau, x \rangle - v_a$. Then, by \eqref{eq:reinterpretingXiintermsofPsi}, we have
    \begin{align*}
        \Xi(a) &= a\int_{\domain}w(\mathcal{L}_f w + \frac{1}{a})f(x)dx \\
        &\qquad + \frac{a}{2}\int_{\domain}|Dw|^2 f(x) dx + a\int_{\partial \domain}w\langle D^-v_a - \frac{x}{a} - \tau,  \mathbf{n}\rangle f(x)\, d\mathcal{H}^{n-1}.
    \end{align*}
   Observe that for each $t \in [0,1]$, the function
    \[
    v_a + tw = \frac{t}{2a}|x|^2 + t\langle \tau, x \rangle +(1 - t)v_a
    \]
    is convex and nonnegative on $\domain$. Applying Lemma \ref{lemma:firstvariationmeasure}, we have that 
    \begin{align*}
        &\lim_{t \to 0^+}\frac{1}{t}(\Psi_a[v_a + tw] - \Psi_a[v_a]) \\
        &\qquad = \int_{\domain}w(\mathcal{L}_fw + \frac{1}{a})f(x)dx + \int_{\partial \domain}w\langle D^-v_a - \frac{x}{a} - \tau,  \mathbf{n}\rangle f(x)\, d\mathcal{H}^{n-1},
    \end{align*}
    so by the minimality of $v_a$, we conclude that
    \[
    \int_{\domain}w(\mathcal{L}_fw + \frac{1}{a})f(x)dx + \int_{\partial \domain}w\langle D^-v_a - \frac{x}{a} - \tau, \mathbf{n}\rangle f(x)\, d\mathcal{H}^{n-1} \geq 0. 
    \]
    Therefore, we see that
    \[
    \Xi(a) \geq \frac{a}{2}\int_{\domain}|Dw|^2 f(x) dx,
    \]
    which yields \eqref{eq:boundonl2convergence} since $w = \frac{1}{2a}|x|^2 + \langle \tau, x \rangle - v_a$. 

Now, assume in addition that $f$ is log-concave. Then, the Poincar\'{e} inequality for log-concave probability measures \cite{Bobkov99} implies that there exists some constant $C = C(f, \domain)$ such that
    \[
    \mathrm{Var}_{f}(\frac{1}{2a}|x|^2 + \langle \tau, x \rangle - v_a) \leq   \frac{C}{\int_{\domain}f(x)dx}\int_{\domain}|Dv_a - \tau - \frac{x}{a}|^2 f(x) dx,
    \]
    where
    \[
    \mathrm{Var}_f(g) := \frac{1}{\int_{\domain}f(x)dx}\int_{\domain}\left|g - \frac{1}{\int_{\domain}f(x)dx}\int_{\domain}g(x) f(x) dx\right|^2 f(x)dx.
    \]
    Combining this with \eqref{eq:boundonl2convergence}, we have that 
    \begin{equation}\label{eq:variancebound}
        \mathrm{Var}_f(\frac{1}{2a}|x|^2 + \langle \tau, x \rangle - v_a) \leq \frac{2C}{\int_{\domain}f(x)dx} \frac{\Xi(a)}{a}.
    \end{equation}
    Now, using \eqref{eq:reinterpretingXiintermsofPsi}, observe that 
    \begin{align*}
        \int_{\domain}(\frac{1}{2a}|x|^2 + \langle \tau, x \rangle - v_a) f(x) dx &= \Xi(a) + \frac{a}{2}\int_{\domain}|Dv_a - \frac{x}{a} - \tau|^2 f(x) dx \\
        &= \frac{c_a}{a}\int_{\domain}f(x)dx. 
    \end{align*}
    Combining this with \eqref{eq:variancebound}, we conclude that
    \[
    \int_{\domain}|\frac{1}{2a}|x|^2 + \langle \tau, x \rangle - \frac{c_a}{a} - v_a|^2 f(x) dx \leq 2C \frac{\Xi(a)}{a},
    \]
    as desired. 
\end{proof}

Recall that the set $Du^{-1}\{0\}$ of consumers who are priced out of the market is referred to as the \textit{exclusion region}. We now show that in the seller's market limit, the volume of the exclusion region vanishes; moreover, we will quantify the rate at which it vanishes.
\begin{corollary}[Bound on the exclusion region]\label{cor:boundontheexclusionregion}
    Take the assumptions of Theorem \ref{theorem:monotonicityformula}. Then, we have 
    \[
    \mathcal{H}^n(Du_{a}^{-1}\{0\}) \leq \frac{2}{\lambda - o(a^{-1})}\frac{\Xi(a)}{a}
    \]
    as $a \to +\infty$.
\end{corollary}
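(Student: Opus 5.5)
The plan is to derive the bound directly from the $L^2$ estimate \eqref{eq:boundonl2convergence} of Proposition \ref{prop:boundonl2convergence}. Since $v_a(x) = \frac{1}{a}u_a(x + a\tau)$, we have $Dv_a(x) = \frac{1}{a}Du_a(x + a\tau)$. Hence the exclusion region $Du_a^{-1}\{0\}$ is the translate by $a\tau$ of the set
\[
E_a := \{x \in \domain : Dv_a(x) = 0\},
\]
and $\mathcal{H}^n(Du_a^{-1}\{0\}) = \mathcal{H}^n(E_a)$. We work up to the null set where $v_a$ is not differentiable.

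On $E_a$ the integrand in \eqref{eq:boundonl2convergence} equals $|\tau + \frac{x}{a}|^2$. Restricting the integral to $E_a$ gives
\[
\frac{1}{2}\int_{E_a}|\tau + \tfrac{x}{a}|^2 f(x)\,dx \leq \frac{\Xi(a)}{a}.
\]
We then bound the integrand from below uniformly. Since $|\tau| = 1$ and $\domain$ is bounded, with $R := \sup_{x\in\domain}|x|$ we get
\[
|\tau + \tfrac{x}{a}|^2 \geq 1 - \tfrac{2R}{a} \quad \text{on } \domain.
\]
(A sharper bound, $|\tau + \frac{x}{a}|^2 = 1 + \frac{2}{a}\langle \tau, x\rangle + \frac{|x|^2}{a^2} \geq 1$, follows from $\langle \tau, x\rangle \geq 0$ on $\domain$, because $-\tau$ is an outward normal at $0 \in \partial\domain$. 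Either bound suffices.) Combining this with $f \geq \lambda$ yields
\[
\frac{1}{2}\left(\lambda - \tfrac{2R\lambda}{a}\right)\mathcal{H}^n(E_a) \leq \frac{\Xi(a)}{a},
\]
which is the claimed inequality, with the $o(a^{-1})$-type correction $2R\lambda/a$ in the denominator.

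The only step needing care is the identification of the exclusion region under the rescaling and translation, including the a.e. differentiability bookkeeping. This is routine because $u_a \in C^{1,1}_{\mathrm{loc}}$ and the boundary has measure zero. Beyond that, the estimate is a Chebyshev-type argument. I expect no real obstacle, other than noting that the error term is really $O(a^{-1})$, or even absent when one uses $\langle\tau,x\rangle\ge 0$.
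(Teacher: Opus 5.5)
Your proposal is correct and is essentially the paper's argument: you restrict \eqref{eq:boundonl2convergence} to the translated exclusion region $Du_a^{-1}\{0\} - a\tau$, where $Dv_a = 0$, and compare the integrand $|\tau + x/a|^2$ with $|\tau|^2 = 1$ using $f \geq \lambda$. One small point: your crude bound $1 - 2R/a$ gives only an $O(a^{-1})$ correction, not an $o(a^{-1})$ one, so it is your sharper observation $\langle \tau, x\rangle \geq 0$ on $\domain$ that actually delivers the statement, in fact in the stronger form $\mathcal{H}^n(Du_a^{-1}\{0\}) \leq \frac{2}{\lambda}\frac{\Xi(a)}{a}$. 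The same sign observation also justifies the paper's own error term, which is really $O(a^{-1})$ but nonpositive.
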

\begin{proof}
    Recall that $f \geq \lambda >0 $ on $\overline{\domain}$. Since $|\tau| = 1$, we have that
    \begin{align*}
        \frac{\lambda}{2} \mathcal{H}^n(Du_{a}^{-1}\{0\}) &\leq \frac{1}{2}\int_{Du_{a}^{-1}\{0\} - a \tau}|Dv_a - \tau|^2 f(x)dx \\
        &= \frac{1}{2}\int_{Du_{a}^{-1}\{0\} - a \tau}|Dv_a - \frac{x}{a} - \tau|^2 f(x)dx  + o(a^{-1})\mathcal{H}^n(Du_a^{-1}\{0\}),
        \end{align*}
    so by \eqref{eq:boundonl2convergence} we conclude that
    \[
    (\frac{\lambda}{2} - o(a^{-1}))\mathcal{H}^n(Du_{a}^{-1}\{0\}) \leq \frac{\Xi(a)}{a},
    \]
    as was claimed. 
\end{proof}

\begin{remark}\label{remark:comparisonofexclusionregionboundwiththesquare}
    In the prototypical case of the square $[a,a + 1]^2 \subseteq \mathbf{R}^2$ with constant density $f \equiv 1$, \cite[Lemma 4.7]{McCannZhang23+} and \cite[Theorems 1.1, 1.5]{McCannRankinZhang24+} show that when $a \geq \frac{7}{2} - \sqrt{2}$, the exclusion region $Du_{\sqrt{2}a}^{-1}\{0\} \subseteq [a, a + 1]^2$ is given by an isosceles right triangle occupying the bottom-left corner of $[a, a+1]^2$ whose side length along the edges of the square is
    \[
    L = \frac{- 2a + \sqrt{4a^2 + 6}}{3};
    \]
    see \cite[Figure 1(c)]{McCannRankinZhang24+}.
    Therefore, we compute 
    \begin{align*}
        \mathcal{H}^2(Du_{\sqrt{2}a}^{-1}\{0\}) &= \frac{1}{2}\left(\frac{-2a + \sqrt{4a^2 + 6}}{3}\right)^2 \\
        &= \frac{4a^2}{9}\left(\sqrt{1 + \frac{3}{2a^2}} - 1\right)^2 \\
        &= \frac{1}{4}a^{-2} + O(a^{-4}).
    \end{align*}
    By Corollary \ref{cor:boundontheexclusionregion}, this implies that in the case of the square with constant density,
    \[
    \Xi(a) \geq \frac{1}{4}a^{-1} + o(a^{-2}).
    \]
\end{remark}

It is an interesting problem to quantify the rate at which $\Xi(a)$ vanishes in the seller's market limit. Remark \ref{remark:comparisonofexclusionregionboundwiththesquare} provides the lower bound $\Xi(a) \geq \frac{1}{4}a^{-1} + o(a^{-2})$ in the case of the square with constant density; it is natural to ask whether one can provide a corresponding upper bound $\Xi(a) \leq Ca^{-1} + o(a^{-1})$.

\end{document}